\documentclass[11pt]{amsart}
\usepackage[utf8]{inputenc}

\usepackage{mathrsfs}
\usepackage{parskip}
\usepackage[margin=1in]{geometry}
\usepackage{wrapfig}

\usepackage{enumerate}
\usepackage{amsthm}
\usepackage{amsmath}
\usepackage{amsfonts}
\usepackage{amssymb}
\usepackage{graphicx}
\usepackage{color}
\usepackage{graphics}
\usepackage{eepic}
\usepackage{nicefrac}

\usepackage{tikzsymbols}

\newcommand{\ignore}[1]{}

\usepackage[colorinlistoftodos]{todonotes}

\newcommand{\commE}{\todo[inline, color=green!40]}

\newcommand{\be}{\begin{equation}}
\newcommand{\ee}{\end{equation}}

\renewcommand{\Re}{\operatorname{Re}}
\renewcommand{\Im}{\operatorname{Im}}

\newcommand{\e}{\varepsilon}

\newcommand{\C}{{\mathbb{C}}}
\newcommand{\R}{{\mathbb{R}}}

\newcommand{\p}{\partial}
\newcommand{\ol}{\overline}

\usepackage{mathtools} 
\mathtoolsset{showonlyrefs,showmanualtags}

\newtheorem{thm}{Theorem}[section]

\newtheorem{conj}[thm]{Conjecture}
\newtheorem{prop}[thm]{Proposition}

\newtheorem{lemma}[thm]{Lemma}

\newtheorem{question}[thm]{Question}
\newtheorem{prob}[thm]{Problem}

\theoremstyle{definition}

\theoremstyle{remark}
\newtheorem{remark}[thm]{Remark}

\title[Critical points]{Counterexamples to Magnanini's conjecture concerning critical points of the torsion function}

\author{Parker Edwards}
\address{Department of Mathematics and Statistics, Florida Atlantic University, Boca Raton, USA-33431.}
\email{edwardsp@fau.edu}

\author{Erik Lundberg}
\address{Department of Mathematics and Statistics, Florida Atlantic University, Boca Raton, USA-33431.}
\email{elundber@fau.edu}

\author{Koushik Ramachandran}
\address{Tata Institute of Fundamental Research, Centre for Applicable Mathematics, Bengaluru, India-560065}
\email{koushik@tifrbng.res.in}

\begin{document}

\begin{abstract} 
We construct planar simply-connected domains whose torsion function has an arbitrary prescribed number of local maxima while the distance to the boundary has only one local maximum inside the domain. This disproves a conjecture of Magnanini asserting that the number of local maxima of the torsion function is bounded above by the number of local maxima of the distance function to the boundary. We also answer a question of Steinerberger concerning the eccentricity of level sets near the point where the torsion function attains its maximum in simply-connected domains.
\end{abstract}

\maketitle

\section{Introduction}

Let $\Omega\subset \mathbb{C}$ be a finitely-connected bounded domain,
and consider the solution $v$ to the following boundary value problem
\begin{equation} 
\label{eq:landscape}
\begin{cases}
\Delta v(z) = -2, & z\in\Omega \\
v(z) =0, & z\in\partial\Omega.
\end{cases}
\end{equation}
Since $v$ is superharmonic, it does not have any minima
in $\Omega$ (only maxima and saddles), and due to the vanishing Dirichlet condition it always has at least one maximum in $\Omega$.  
When $\Omega$ is simply-connected, the function $v$ coincides with the \emph{torsion function} of $\Omega$,
used in elasticity theory to model the shear stress
in a cylindrical bar with uniform cross section $\Omega$
undergoing a twisting force \cite{Musk}.
In this context, the critical points of $v$ correspond to the physical locations where no stress is felt.
A natural question is whether the number of critical points of $v$ is controlled by purely geometric features of $\Omega$. In particular, Magnanini made the following conjecture in \cite[Sec. 3.4]{Magnanini}.

\begin{conj}[Magnanini]\label{conj:Magn}
Suppose $\Omega$ is bounded and simply-connected.
The number $N$ of critical points of $v$ in $\Omega$ satisfies
$$ N \leq 2m-1,$$
where $m$ denotes the number of maxima in $\Omega$ of the function $d_{\p \Omega}(z)$ defined as the distance from $z$ to $\p \Omega$.
Equivalently, $M \leq m$ where $M$ denotes the number of maxima of the torsion function.
\end{conj}
The ``equivalently'' part of the conjecture follows from superharmonicity, the Hopf Lemma, and the Poincar\'e-Hopf index theorem which imply that in a simply-connected domain, the number of saddles is one less than the number of maxima \cite[Thm. 3.3]{AlMa}.

We will refer to the maxima (locations where a local maximum occurs) of $v$ as \emph{hot spots}, since the solution to \eqref{eq:landscape} can be interpreted as a steady state temperature distribution under uniform heat source with boundary held at constant temperature, and we refer to the maxima of $d_{\partial \Omega}$ as \emph{far spots}.  Our main result provides counterexamples with arbitrarily many hot spots and only one far spot.

\begin{thm}\label{thm:MagCtr}
Fix an integer $M>1$. There exists a bounded simply-connected planar domain (defined in \eqref{eq:domaindef} below) whose torsion function has at least $M$ local maxima, while the distance function to the boundary has only one local maximum, i.e., it is possible for a domain to have an arbitrary number of hot spots with only one far spot.
\end{thm}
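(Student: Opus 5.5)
We will build the domain by attaching $M$ thin ``fingers'' or lobes to a central region, arranged so that the distance function $d_{\p\Omega}$ has a unique maximum in the central region, while the torsion function $v$ has a local maximum inside each lobe. The guiding contrast is between the two functions. The distance function is purely local and geometric: on a region whose inscribed disks are all smaller than the central one, $d_{\p\Omega}$ can be made to increase monotonically toward the center, so no new far spots appear. The torsion function, by contrast, is governed by the PDE. In a long channel of width $w$ the profile $v$ approaches the one-dimensional solution $v \approx y(w-y)$. That profile is constant along the channel, and a slight local bulge or widening in a channel therefore produces a strict local maximum of $v$.

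The concrete plan is to take a central disk $D$ of radius $1$ and attach $M$ long straight channels of width $2\delta<2$ emanating radially. At the far end of each channel we place a \emph{tapered} bulb: a region whose width first increases slightly to $2\delta(1+\eta)$ and then decreases, shaped like a wedge opening toward the center. The taper is chosen so that every point of the bulb has its nearest boundary point positioned so that moving toward $D$ strictly increases the distance, which gives no far spot there. The defining formula for $\Omega$ (the paper's \eqref{eq:domaindef}) would encode this union, smoothed at the junctions.

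The first key step is to verify that $d_{\p\Omega}$ has exactly one local maximum, at the center of $D$. This is an elementary geometric check: at any point $z \ne 0$, exhibit a direction in which $d$ strictly increases, using explicit nearest-point sets. This step needs the channel walls to diverge toward the center, so each channel should be a very slender trapezoid widening toward $D$ rather than a parallel strip.

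The second key step is to show that $v$ has a local maximum in each finger. This is the main obstacle, because with walls widening toward $D$ the naive one-dimensional heuristic suggests $v$ also increases toward $D$. The mismatch must therefore come from something nonlocal, so I would instead design each finger as follows. The finger is a long, wide-ish chamber $R$ (inscribed radius $r<1$) joined to the channel to $D$ through a narrow neck of width $\e \ll r$. The chamber walls are shaped, for instance as a sector, so that $d_{\p\Omega}$ within it still increases toward the neck, and the neck itself is a short converging wedge. The comparison then runs as follows:
\begin{itemize}
\item In the neck, $v \le C\e^2$ by comparison with the torsion function of a strip of width $\e$ (a supersolution argument using $\frac{\e^2}{4}-y^2$ suitably cut off, combined with the maximum principle on the neck region).
\item Inside $R$, $v \ge c r^2$ at some interior point, by comparison with the torsion function $\frac{\rho^2-|z-z_0|^2}{2}$ of an inscribed disk of radius $\rho \sim r$.
\end{itemize}
Hence $\sup_R v$ exceeds the values of $v$ on the neck, and since $v=0$ on the rest of $\p R$, the maximum of $v$ on $\overline{R}$ is attained in the interior. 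This produces a hot spot in each of the $M$ chambers, together with at least one hot spot in $D$.

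The delicate point to check is compatibility: a sector-shaped chamber of opening angle $\theta$ with vertex pointing away from $D$ has $d_{\p\Omega}$ increasing toward its wide end, and that wide end is exactly where the neck must attach. We must confirm that the geometry near the neck entrance introduces no local maximum of $d$, for example by making the neck a wedge that continues to widen slightly into the channel and then into $D$.
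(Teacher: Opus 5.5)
\textbf{The construction has a genuine gap, and it is fatal.} It sits exactly at the point you flag as ``delicate'': the maximum-principle argument that gives a hot spot in each chamber gives a far spot there as well.

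Let $\gamma$ be a straight cross-cut of the neck, of length at most $\e$, with endpoints on $\p\Omega$. Let $R$ be the chamber side of $\Omega\setminus\gamma$.
\begin{itemize}
\item Every $p\in\gamma$ lies within $\e/2$ of an endpoint of $\gamma$, so $d_{\p\Omega}(p)\le\e/2$.
\item $d_{\p\Omega}=0$ on $\p R\setminus\gamma$.
\item At the center of your inscribed disk of radius $\rho\sim r\gg\e$ we have $d_{\p\Omega}=\rho>\e/2$.
\end{itemize}
Hence $\max_{\ol R}d_{\p\Omega}$ is attained in the interior of $R$, and every chamber contains a far spot. No shaping of the chamber walls avoids this: the wide end of a sector cannot feed into a neck narrower than itself without $d_{\p\Omega}$ dropping. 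The same objection applies to your first design, with bulbs of width $2\delta(1+\eta)$ behind a channel of width $2\delta$.

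The obstruction is also quantitative. Absence of far spots forces any disk contained in a region cut off by a cross-cut of length $\ell$ to have radius $\rho\le\ell/2$. Then the inscribed-disk lower bound for $v$ is at most $\rho^2/2\le\ell^2/8$, while your strip upper bound on the cut is $\ell^2/4$. So the two comparisons you propose can never be ordered the right way. Any width-based feature (bulb, chamber, local widening) is seen by $d_{\p\Omega}$ at least as strongly as by $v$.

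\textbf{The missing idea is a mechanism that raises $v$ while leaving $d_{\p\Omega}$ unchanged.} Detecting it requires sharp asymptotics of a lower-order effect, not crude comparisons. The paper uses curvature:
\begin{itemize}
\item Take a tube of constant radius $\e$ around a snake-shaped curve. Then $d_{\p\Omega}\equiv\e$ along the centerline.
\item Near each semicircular bend, the torsion function agrees up to $O(e^{-c/\e})$ with that of the annulus, whose ridge height is $\e^2+\e^4/9+O(\e^6)$.
\item Across the straight parts it is at most $\e^2+O(e^{-c/\e})$. This follows from strip comparison via the exponential-decay Lemmas~\ref{lem:straightneck} and~\ref{lem:curvedneck}.
\end{itemize}
This yields one hot spot per bend. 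Only afterwards is a taper $r(s)=\e-\delta s$ with $\delta\ll\e$ introduced. The taper makes $d_{\p\Omega}$ strictly monotone along the centerline, which is the medial axis (Lemma~\ref{lemma:medialaxis}). The hot spots survive by the continuity Lemma~\ref{lemma:cvgc}, because the advantage $\e^4/9$ is fixed while the perturbation vanishes as $\delta\to0$.
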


Thus, restricting the number of far spots does not, in general, place any constraint on the number of hot spots. This is reminiscent of the examples of Gladiali and Grossi \cite{GG} of domains having an arbitrary number of hot spots while the curvature of the boundary of the domain changes sign at only two points.  The broader problem of identifying purely geometric features of the domain that govern the number of hot spots remains wide open.
It is well-known that when $\Omega$ is convex 
$v$ has a unique critical point, but little is known in the way of upper bounds on the number of critical points in nonconvex settings other than for special classes of algebraic domains (where the number of hot spots admits an upper bound in terms of the algebraic degree of the defining function of the domain, see \cite{LR} for details).

Let us give a brief intuitive description of the proof of Theorem \ref{thm:MagCtr} which is provided in Section \ref{sec:proofMag} below. The counterexample is constructed as a thin tubular neighborhood of a snake-like curve with $M$ bends, consisting of alternating straight and curved pieces (see Figure \ref{fig:MagCtr5} for a contour plot of an example with $M=5$). The torsion function in such a domain is locally well approximated  at the middle of each straight or curved section by the model case of a strip or annulus, respectively (using Lemmas \ref{lem:straightneck} and \ref{lem:curvedneck} below, which are adaptations of \cite[Lemma 5.2]{LR}).  This is used to obtain thin-domain asymptotics that reveal a curvature-induced increase in the torsion function at each bend, leading to $M$ hot spots. Finally, by slightly tapering the tubular neighborhood, i.e., allowing its radius to shrink (at a sufficiently slow rate to preserve the hot spots), we ensure that the distance function has a unique local maximum, see Lemma \ref{lemma:medialaxis}.  

The critical points of solutions to \eqref{eq:landscape} play an important role in recent studies of localization of eigenfunctions \cite{Mayboroda}, \cite{ArnoldDavidFilJerMay}, 
\cite{ArnoldDavidJerMayFil},
\cite{MayFilclamped}, \cite{Steinerberger},
where the solution $v$ to   \eqref{eq:landscape} plays a prominent role as the so-called ``localization landscape''.  In particular, the hot spots predict locations of high-amplitude activity.  

Our second result addresses a question of Steinerberger about the geometry of the level sets near the maximum of the torsion function.

For planar convex domains, Steinerberger proved that the Hessian matrix of the torsion function evaluated at its point of maximum has maximum eigenvalue that is bounded above by a negative constant depending only on the ratio of the diameter and inradius of the domain.  

\begin{thm}[Steinerberger]\label{thm:Steinerberger}
Let $\Omega$ be a bounded, convex domain, and let $z_0 \in \Omega$ be the point where the torsion function $v$ assumes its maximum. 
There are universal constants $c_1, c_2 > 0$ such that
\[
\lambda_{\max} \left( D^2 v(z_0) \right) \leq -c_1 \exp\left( -c_2 \frac{\operatorname{diam}(\Omega)}{\operatorname{inrad}(\Omega)} \right),
\]
where $\lambda_{\max}$ denotes the largest eigenvalue of the Hessian matrix of $v$ at $z_0$.
\end{thm}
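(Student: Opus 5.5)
Since $\Omega$ is convex, the torsion function $v$ is concave; more precisely $\sqrt{v}$ is concave, a classical result of Makar-Limanov in the plane. I would exploit this in three steps.

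\textbf{Step 1: a quantitative lower bound on $v(z_0)$ and a profile along the weak direction.} Let $e$ be the unit eigenvector for $\lambda_{\max}=\lambda$, and put $f(t)=v(z_0+te)$. Then $f$ is defined for $t$ in an interval of length at most $\operatorname{diam}(\Omega)$, vanishes at both endpoints, and has $f'(0)=0$ and $f''(0)=\lambda$. Comparing with the inscribed disk of radius $r=\operatorname{inrad}(\Omega)$, where the torsion function is $(r^2-|z-a|^2)/2$, the maximum principle gives $v(z_0)\ge r^2/2$.

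\textbf{Step 2: control of higher derivatives.} The key issue is that concavity of $f$ alone does not prevent $f''(0)$ from being tiny, since $f$ could be flat near $0$. I would therefore use the PDE. Along the line, $f''=-2-\partial_{e^\perp}^2 v$. The plan is to show that $v$ cannot be nearly flat in direction $e$ over a long stretch. Suppose $|\lambda|\le\varepsilon$. Harmonic-type estimates then propagate this: $w=v-v(z_0)+\tfrac12|z-z_0|^2\cdots$ does not work directly, so instead I would use that the third derivatives of $v$ at interior points at distance $\gtrsim r$ from $\partial\Omega$ are bounded by $C/r$, since $v+|z|^2/2$ is harmonic and bounded by $\operatorname{diam}^2$. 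A Taylor/Harnack iteration then says that if $\partial_e^2 v$ is exponentially small at $z_0$, it stays small along a chain of disks of radius $\sim r$. Each disk step loses a constant factor, which gives a total factor of $e^{c\,\operatorname{diam}/r}$ after $\operatorname{diam}/r$ steps. The relevant quantity here is the harmonic function $h=\partial_e^2 v$, which is $\le 0$ by concavity. Harnack's inequality for the nonnegative harmonic function $-h$, applied along a Harnack chain of length $O(\operatorname{diam}/\operatorname{inrad})$, yields
\[
-h(z)\le C^{\operatorname{diam}/r}\,(-h(z_0))
\]
on the region of points at distance $\ge r/2$ from the boundary. A convex domain contains a long "core" at this distance along direction $e$, of length comparable to the width.

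\textbf{Step 3: contradiction.} Integrating $f''=h$ twice along the segment through $z_0$ in direction $e$, up to points at distance about $r/2$ from $\partial\Omega$, shows that $v$ drops by at most $\operatorname{diam}^2 C^{\operatorname{diam}/r}|\lambda|$ there. Near the boundary, however, $v$ must drop to $O(r^2)$ relative to $v(z_0)$, or at least by a fixed amount $\gtrsim r^2$; this follows from the boundary gradient bound $|\nabla v|\lesssim \operatorname{diam}$ together with the distance to the boundary being $\le r/2$ in some direction $e$. This forces
\[
|\lambda|\ge c\,(r/\operatorname{diam})^2C^{-\operatorname{diam}/r},
\]
and the polynomial factor is absorbed into the exponential.

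The main obstacle is Step 3's geometry. The segment through $z_0$ in direction $e$ must reach a point where $v$ is genuinely smaller by $\gtrsim r^2$ while staying at distance $\gtrsim r$ from $\partial\Omega$, so that Harnack applies. I would handle this with John's ellipse (for convex sets) to build the Harnack chain inside a fixed comparable ellipse.
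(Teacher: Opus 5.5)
The paper does not prove this statement. It quotes it from Steinerberger as motivation for Question~\ref{q:Steinerberger}, so there is nothing internal to compare with, and your proposal has to stand on its own. It does not, because the Harnack step rests on a false premise. Your opening inference runs backwards: concavity of $v$ implies concavity of $\sqrt v$, not conversely, and the torsion function of a convex domain is in general \emph{not} concave. For the equilateral triangle of height $H$, $v=\frac{2}{H}\ell_1\ell_2\ell_3$, where $\ell_i$ are the distances to the sides. Near a vertex, $D^2v\to 2(\nabla\ell_1\otimes\nabla\ell_2+\nabla\ell_2\otimes\nabla\ell_1)$, which has eigenvalues $1$ and $-3$. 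Makar-Limanov's theorem only gives $\partial_e^2v\le(\partial_e v)^2/(2v)$, which is a sign condition only where $\partial_e v=0$, e.g.\ at $z_0$. So nothing justifies $h=\partial_e^2 v\le 0$ on the region where you run the Harnack chain. Without a one-sided bound, Harnack gives nothing: a harmonic function equal to $\lambda$ at $z_0$, such as $\lambda+\Re(z-z_0)$, need not be small on any neighbouring disk. Steps 2 and 3 fall with this.

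Restricting to the core does not repair this. In a thin isosceles triangle with apex angle $\alpha$, the axis is by symmetry an eigendirection at the maximum, and a boundary-layer computation shows it is the weak one, with $\lambda_{\max}$ of order $-\alpha$. Yet on a middle portion of the axis, at distance at least about $r/2$ from $\partial\Omega$, $D^2v$ agrees up to errors exponentially small in $1/\alpha$ with the Hessian of the sector solution $\frac12\bigl(\frac{x^2-y^2}{\cos\alpha}-x^2-y^2\bigr)$. For that solution $\partial_{xx}=\frac{1}{\cos\alpha}-1>0$, so $h$ changes sign on the very segment you integrate along.

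Convexity has to enter without going through a sign of $\partial_e^2 v$. For instance, $\phi=v-v(z_0)+\langle z-z_0,e^\perp\rangle^2$ is harmonic with quadratic part $\frac{\lambda}{2}\bigl(\langle z-z_0,e\rangle^2-\langle z-z_0,e^\perp\rangle^2\bigr)$ at $z_0$. Its boundary values $\langle z-z_0,e^\perp\rangle^2-v(z_0)$ change sign at most four times along a convex curve. If $\lambda=0$, there would be at least six nodal arcs at $z_0$, hence six boundary sign changes. The waist of the Neumann oval in Theorem~\ref{thm:SteinNeg} is exactly what permits more. A quantitative version of a mechanism like this is the real content of the theorem, and the proposal does not supply one.

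There is also a secondary error in Step 3. The bound $|\nabla v|\lesssim\operatorname{diam}(\Omega)$ only gives $v\lesssim\operatorname{diam}(\Omega)\,r$ at distance $r/2$ from $\partial\Omega$, which forces no drop below $v(z_0)\ge r^2/2$. You need a gradient bound of order $r$, for example Payne's $|\nabla v|^2\le 4(\max v-v)$ on convex domains together with $\max v\le 9r^2/4$, and then you must go to distance about $r/12$ from the boundary. The geometric obstacle you flagged, by contrast, is easy: the distance to $\partial\Omega$ is concave on a convex domain, so the whole segment stays in the core.
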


Since both eigenvalues are negative and their sum (which is the trace of the Hessian) is $\Delta v = -2$, this gives a bound on the eccentricity of the (approximately elliptical) level sets in the vicinity of the maximum. Steinerberger \cite[Sec. 3]{Steinerberger} asked whether the convexity assumption is needed.

\begin{question}[Steinerberger]
\label{q:Steinerberger}
Does Theorem \ref{thm:Steinerberger} also hold true on domains that are not convex but merely simply connected or perhaps only bounded?
\end{question}

A partial answer to this question was given in \cite{Chen2021}, namely, providing multiply-connected examples with Hessian matrix arbitrarily close to degenerate at the point of maximum.  This addresses the ``perhaps only bounded'' part of the question, leaving open whether Theorem \ref{thm:Steinerberger} holds true for simply-connected domains.  We give a complete answer to Question \ref{q:Steinerberger} by showing that the classical Neumann oval (also known as Hippopede of Booth) has, for certain parameter values, a degenerate (not just close to degenerate) Hessian at the point where it attains its maximum.

\begin{thm}
[negative answer to Question \ref{q:Steinerberger}]
\label{thm:SteinNeg}
Let $\Omega \subset\mathbb{C}$ be the interior region of the Neumann oval
$$\{(x,y): (x^2 +y^2)^2 \leq a^2(x^2 + y^2) + 4x^2\}.$$
For $a = \sqrt{\frac{2}{1+\sqrt{2}}}$ the torsion function $v$ of $\Omega$ has a unique maximum that occurs at $z=0$, where the Hessian $H$ has rank one and is given by 
$$H = \left( \begin{array}{cc}
   0 & 0 \\
   0 & -2 \\
  \end{array}\right).$$
\end{thm}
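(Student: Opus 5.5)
The plan is to compute $v$ explicitly through a conformal map. The Neumann oval is the image of the unit disk under a rational map of degree two, and for such quadrature-type domains the torsion function has a closed form. Concretely, I will write $\Omega = f(\mathbb{D})$ with
$$f(\zeta)=\frac{\alpha\zeta}{1-\beta^2\zeta^2}$$
for suitable real parameters $\alpha,\beta$ depending on $a$. This is the standard parametrization of the Booth lemniscate/Neumann oval as the image of circles under $\zeta\mapsto \zeta/(1-\beta^2\zeta^2)$. It follows from the identity that $\Omega$ is the inversion of an ellipse centered at the origin: indeed $1/z$ maps the quartic to $a^2|w|^2+4(\Re w)^2 \ge 1$ after clearing denominators.

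The torsion function is then $v(z)=\tfrac12\bigl(-|z|^2+h(z)\bigr)$, where $h$ is harmonic with $h=|z|^2$ on $\partial\Omega$. The Schwarz function of the boundary is $S(z)=\bar f(1/f^{-1}(z))$, which is meromorphic in $\Omega$ with a single simple pole at the point $f(1/\bar{\ \,})$ corresponding to $\zeta=\infty$; the reflected poles $\zeta=\pm1/\beta$ lie outside $\bar{\mathbb D}$. I will therefore not use the Schwarz function directly but instead compute on the disk. On $|\zeta|=1$ we have
$$|f(\zeta)|^2 = \frac{\alpha^2}{(1-\beta^2\zeta^2)(1-\beta^2\bar\zeta^{2})}.$$
Writing $\bar\zeta=1/\zeta$ gives a rational function of $\zeta$ whose partial fraction decomposition splits into a part analytic in $\mathbb D$ plus its conjugate plus a constant. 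This yields an explicit harmonic extension
$$h(f(\zeta))=\frac{\alpha^2}{1-\beta^4}\,\Re\!\Bigl(\frac{2}{1-\beta^2\zeta^2}\Bigr)-\text{const},$$
or something close to it, which can be checked directly by expanding in partial fractions. Equivalently, in the $z$ variable this is a harmonic function built from the rational map $\zeta^2=\zeta(z)^2$. Since $\zeta^2$ is an explicit algebraic function of $z$ (inverting $f$ involves a square root), $v$ becomes an explicit function near $0$.

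Next I will Taylor expand at $z=0$. By the symmetries $x\mapsto -x$ and $y\mapsto -y$, the gradient vanishes at $0$ and the Hessian is diagonal. Its trace is $-2$, so I only need $v_{xx}(0)$. That is $-1+\tfrac12 h_{xx}(0)$, and $h_{xx}(0)=\Re$ of the second complex derivative of the analytic function whose real part is $h$. Computing this from $\zeta(z)=z/\alpha+O(z^3)$ reduces to an algebraic condition on $\alpha,\beta$. Expressing $\alpha,\beta$ in terms of $a$ and solving $v_{xx}(0)=0$ should give $a^2=2/(1+\sqrt2)$. This bookkeeping, matching the parameters $(\alpha,\beta)$ to $a$ and getting the constant right, is routine but error-prone, and I would verify it on the degenerate case of a disk.

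The main obstacle is showing that $0$ is the \emph{unique} maximum, since the Hessian is degenerate there and the second-order test fails. With $v$ explicit, I will work in the $\zeta$-coordinate, where $v\circ f$ is an explicit real function of $(|\zeta|,\arg\zeta)$ involving $|1-\beta^2\zeta^2|^{-2}$ and $\Re(1-\beta^2\zeta^2)^{-1}$. The first thing to try is to restrict to the $x$-axis and show that $v(x,0)$ is decreasing in $|x|$, with a negative quartic leading term at $0$. Then, for each fixed $x$, I will show that $v$ is strictly decreasing in $|y|$ (for example via a positivity argument for $-\partial_y v/y$ using the explicit formula). Together these confine all critical points to the origin, and the origin is then a strict global maximum. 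If direct monotonicity is messy, a fallback is to show that $\nabla v$ has only the zero at the origin by reducing the critical point equations to a polynomial system in $\zeta$ and checking its roots. By the index relation that the number of saddles is one less than the number of maxima, this would give uniqueness.
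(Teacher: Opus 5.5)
Your approach matches the paper's for the explicit formula and the Hessian. The real difference is that you prove uniqueness of the maximum, which the paper only cites.

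\textbf{Formula and Hessian.} Your $f$ is exactly the inverse of the paper's $\phi$, with $\alpha=(R^4-1)/R^3$ and $\beta=1/R$, where $R-1/R=a$. Your partial-fraction extension is correct and reproduces the paper's formula, which can be rewritten as $v=\frac{1}{2R^2}\Re\sqrt{(R^4-1)^2+4R^4z^2}-\frac{|z|^2}{2}$. Your $v_{xx}(0)=-1+\frac{2\beta^2}{1-\beta^4}$ vanishes exactly at $\beta^2=\sqrt2-1$, i.e.\ $R^2=1+\sqrt2$, which gives the stated $a$.

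\textbf{Uniqueness.} The paper does not prove this; it imports from \cite{LR} the fact that $\nabla v$ vanishes only at $z=0$ when $R\ge R_0$. Your monotonicity plan works and is short, because at $R_0$ one has $R_0^4-1=2R_0^2$. Everything then collapses to $v=\Re\sqrt{1+z^2}-\frac{|z|^2}{2}$ and $\nabla v=\overline{w}-z$ with $w=z/\sqrt{1+z^2}$. The branch points $\pm i$ lie outside $\overline{\Omega}$ because $a<1$.
\begin{itemize}
\item On the real axis, $v_x=x\bigl((1+x^2)^{-1/2}-1\bigr)$, which gives the negative quartic term you predicted.
\item Whenever $\Im z>0$ one has $\Im w>0$. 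On the imaginary axis this is direct. In each open quadrant, $w^2=1-(1+z^2)^{-1}$ lies in the same open half-plane as $z^2$. Each quadrant piece of $\Omega$ is connected, since $\Omega$ is star-shaped, so continuity from $w\approx z$ near $0$ keeps $w$ in the quadrant of $z$.
\item Hence $v_y=-\Im w-y<0$ for $y>0$, so the origin is the only critical point and a strict global maximum.
\end{itemize}
This gives a self-contained proof at the critical parameter. The paper's citation is shorter and covers all $R\ge R_0$.

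\textbf{Two minor points.} First, your aside on the Schwarz function is incorrect. In fact $\bar f(1/\zeta)=\alpha\zeta/(\zeta^2-\beta^2)$ has two simple poles inside $\Omega$, at $f(\pm\beta)$, and is regular at $z=0$. Since you discard that route, this does no harm. Second, the index relation is not needed at the end. Since $v>0$ in $\Omega$ and $v=0$ on $\partial\Omega$, $v$ attains an interior global maximum. That maximum must be a critical point, so it is the origin once the origin is shown to be the only critical point.
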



The proof of Theorem \ref{thm:SteinNeg} extends the computations from \cite{LR} utilizing an explicit analytic description of the torsion function for the Neumann oval \cite{FlSi}.  We note that these examples are not only simply-connected but also star-shaped (see Remark \ref{rmk:star}), showing further the importance of the convexity assumption in Theorem \ref{thm:Steinerberger}.

\subsection*{Acknowledgements}
The second named author is supported by Simons Foundation grant 712397, which also provided travel support for the third named author. The third named author gratefully acknowledges the hospitality of Florida Atlantic University during his visit when this research was initiated.

\section{Domains with lots of hot spots and only one far spot (counterexamples to Magnanini's conjecture)}\label{sec:proofMag}

Before beginning the proof of Theorem \ref{thm:MagCtr}, we recall the explicit solution for the torsion function of an annulus and take note of an asymptotic for its maximum value in the case of a thin annulus.

Recall that the solution $v(z)$ to \eqref{eq:landscape} in the case $\Omega$ is an annulus $\{ z: r < |z| < R \}$ is given by
$$ v(z) = \frac{R^2 - |z|^2}{2} + A \log\left( |z| / R\right), \quad \text{where } A=\frac{R^2 - r^2}{2 \log(R/r)}.$$

The function $v(z) = v(\rho)$ depends only on the radial coordinate $\rho = |z|$ and has a degenerate circular ridge of maxima.  The radius of the ridge is determined by setting $v'(\rho) = -\rho + A / \rho = 0$ which gives $\rho^2 = A$.  Thus, the height $v^*$ of the ridge (maximum value of $v$) is given by
$$v^* = v\left(\sqrt{A}\right) = \frac{R^2}{2} + \frac{A}{2}\left( \log A - 1 - 2 \log R \right).
$$
Specializing to the case of a thin annulus,  with inner radius $r = r_0 - \e$ and outer radius $R = r_0 + \e$, and expanding in powers of $\e$, we obtain an asymptotic (for small $\e>0$) expansion for the height of the maximal ridge:
\begin{equation}\label{eq:annulusasymp} v^* = \e^2 + \frac{\e^4}{9r_0^2} + O(\e^6), \quad  \text{as } \e \rightarrow 0.
\end{equation}

\begin{figure}[ht]
    \centering
    \includegraphics[scale=0.44]{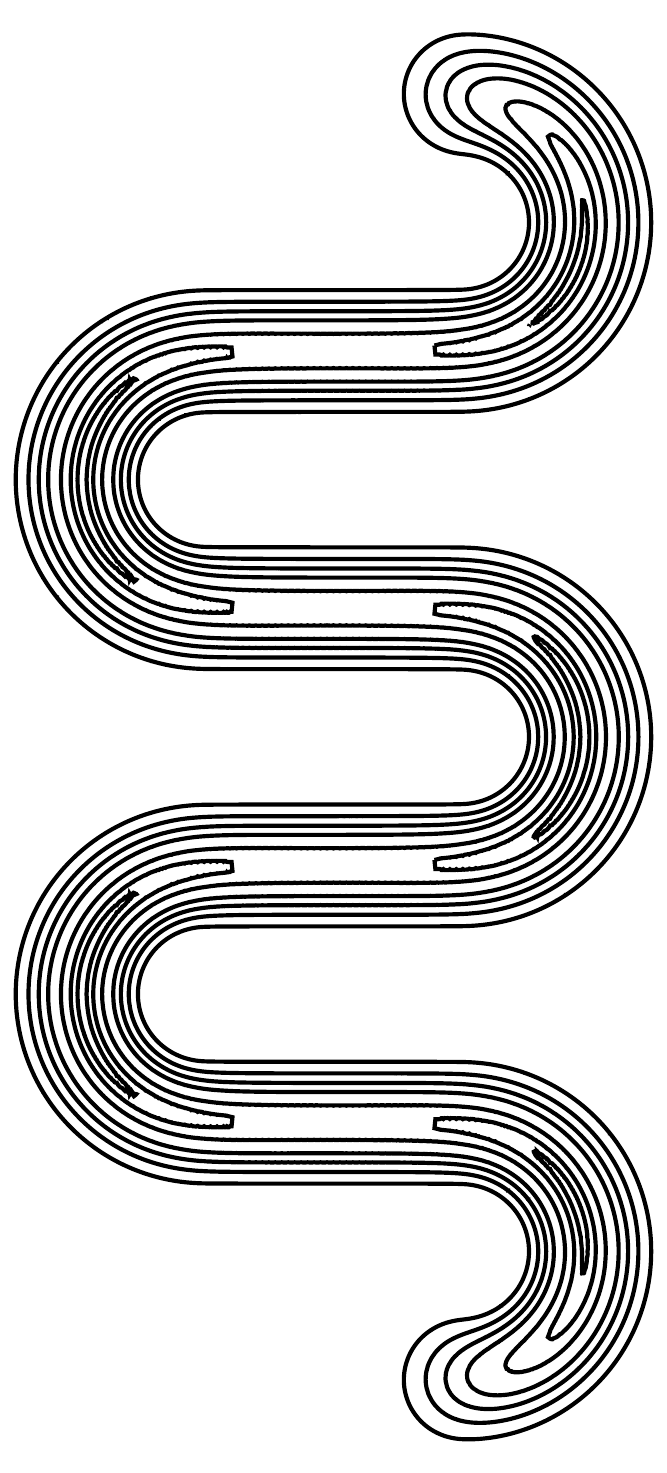}
    \caption{Contour plot of the torsion function of the domain $\Omega_{\e,\delta}$ with $\e = 0.5$ and $\delta=0$ for $M=5$.  The five hot spots are apparent. Introducing a slight taper by taking $\delta>0$ small preserves the hot spots while leaving only one far spot.  
    Although the proof takes $\e>0$ small in order to use thin-domain asymptotics, numerical simulations indicate that the same phenomenon persists for moderately large $\e>0$ as illustrated here.}
    \label{fig:MagCtr5}
\end{figure}

\begin{proof}[Proof of Theorem \ref{thm:MagCtr}]
We construct the domain as a variable-radius tubular neighborhood of a smooth curve $\Gamma$ consisting of the union of the following collection of line segments and semi-circles: For $k=1,\ldots,M-1$ take the horizontal line segments
$[-1 + 2ki,1+2ki]$ in the complex plane. Take the union of these line segments with $M$
semicircles: for
$\ell=0,\ldots,\lfloor (M-1)/2\rfloor$ take the right semicircle of unit
radius centered at $1+(1+4\ell)i$, and for
$\ell=0,\ldots,\lfloor (M-2)/2\rfloor$ take the left semicircle of unit radius
centered at $-1 + (3+4\ell)i$.
By smoothing out a small portion of the circular arcs at each junction where a semi-circle meets a line segment, we may assume $\Gamma$ is $C^3$-smooth, for the purpose of applying Lemma \ref{lemma:medialaxis} below in the last step of the proof.

Let $z(s)$ be an arclength parametrization of $\Gamma$, and define
\begin{equation}\label{eq:domaindef}
\Omega_{\varepsilon,\delta}
=
\bigcup_s B_{r(s)}(z(s)),
\qquad
r(s)=\varepsilon-\delta s.
\end{equation}
Then we will show that, for $\varepsilon\gg\delta>0$ sufficiently small,
$\Omega_{\varepsilon,\delta}$ has the desired properties stated in the theorem; its torsion function has at least $M$ local maxima while the distance to the boundary
$\operatorname{dist}(\cdot,\partial\Omega_{\varepsilon,\delta})$ has only one local maximum.

\subsection*{Step 1 (Abundance of hot spots for $\Omega_{\e,0}$ via thin-tube asymptotics).}

As a first step, we show that the $\delta=0$ case $\Omega_{\e,0}$ satisfies the criterion on the number of hot spots (it has infinitely many far spots, but that will be rectified later).  We will need the following lemmas that will be used to establish contrasting asymptotics in the straight and curved portions of the domain.

\begin{lemma}\label{lem:straightneck}
For \(\e>0\), let
\[
N_\e=(-1,1)\times(-\e,\e),
\]
and suppose \(u_\e:N_\e\to\mathbb{R}\) is harmonic and satisfies
\[
\begin{cases}
u_\e(x,y)=0, & y=\pm\e,\\
|u_\e(x,y)|\le K, & x=\pm1,
\end{cases}
\]
where \(K>0\) is independent of \(\e\). Then
\[
\sup_{y\in[-\e,\e]}|u_\e(0,y)|
\le
\frac{2K}{
\cosh\left(\frac{\pi}{4\e}\right)
}.
\]
In particular, there is a constant $c>0$ such that
\[
\sup_{y\in[-\e,\e]}|u_\e(0,y)|
=
O\!\left(
e^{-c/\e}
\right)
\quad \text{as }\e\to0.
\]
\end{lemma}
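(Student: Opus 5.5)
A comparison/barrier argument with the maximum principle will give this estimate. I would construct an explicit positive harmonic function $w$ on $N_\e$ that vanishes nowhere on $\overline{N_\e}$, dominates $K$ on the vertical sides $x=\pm1$, and is nonnegative on the horizontal sides $y=\pm\e$. The natural candidate is the separated-variables solution
\[
w(x,y)=C\cosh\!\Big(\frac{\pi x}{4\e}\Big)\cos\!\Big(\frac{\pi y}{4\e}\Big),
\]
which is harmonic because $\partial_x^2$ contributes $+(\pi/4\e)^2w$ and $\partial_y^2$ contributes $-(\pi/4\e)^2w$. The frequency $\pi/(4\e)$ rather than $\pi/(2\e)$ is chosen so that on $|y|\le\e$ we have $|\pi y/(4\e)|\le\pi/4$, hence $\cos(\pi y/(4\e))\ge 1/\sqrt2>0$ on the closed rectangle. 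This avoids any issue at the corners, where $u_\e$ may fail to be continuous.

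First I choose $C$ so that $w\ge K$ on $x=\pm1$. Since $\cos(\pi y/(4\e))\ge 1/\sqrt2$ there, it suffices to take
\[
C=\frac{\sqrt2\,K}{\cosh(\pi/(4\e))}.
\]
On $y=\pm\e$ we have $w>0=u_\e$. By the maximum principle applied to $w\mp u_\e$, both functions are harmonic with nonnegative boundary values, so $|u_\e|\le w$ in $N_\e$. Implicitly this assumes $u_\e$ is continuous up to the boundary; otherwise I would read the boundary conditions in the sense of $\limsup$ and use the Phragmén–Lindelöf/limsup form of the maximum principle, which is the only mild technical point.

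Evaluating at $x=0$ gives
\[
|u_\e(0,y)|\le C\cos\!\Big(\frac{\pi y}{4\e}\Big)\le C=\frac{\sqrt2K}{\cosh(\pi/(4\e))}\le\frac{2K}{\cosh(\pi/(4\e))},
\]
which is the stated bound. The exponential form follows from $\cosh t\ge e^t/2$, which yields $|u_\e(0,y)|\le 4Ke^{-\pi/(4\e)}$, so $c=\pi/4$ works. I expect no real obstacle: the only design choice is taking the frequency strictly below $\pi/(2\e)$ so that the barrier stays bounded away from zero on the whole closed rectangle, and the constant $2$ absorbs the resulting factor $\sqrt2$.
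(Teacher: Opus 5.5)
Your proposal is correct and is essentially the paper's proof: both use the barrier $C\cosh\bigl(\frac{\pi x}{4\varepsilon}\bigr)\cos\bigl(\frac{\pi y}{4\varepsilon}\bigr)$, which stays positive on the closed rectangle, together with the maximum principle applied to $w\mp u_\varepsilon$. The only difference is the constant: you take $C=\sqrt2K/\cosh(\pi/(4\varepsilon))$, which is exactly what $\cos\ge 1/\sqrt2$ requires, and then relax to the stated $2K$, whereas the paper takes $C=2K/\cosh(\pi/(4\varepsilon))$ from the start.
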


\begin{proof}[Proof of Lemma]
Define
\[
h(x,y)
=
C_\e
\cosh\left(\frac{\pi x}{4\e}\right)
\cos\left(\frac{\pi y}{4\e}\right),
\]
where
\[
C_\e
=
\frac{2K}{
\cosh\left(\frac{\pi}{4\e}\right)
}.
\]

Then \(h\) is positive and harmonic in $\R \times(-2\e,2\e).$
Moreover, $
h > 0$ on $y=\pm \e$, and $h > K$ for $x=\pm1$ with $|y| \leq \e$.

Hence, \(|u_\e|\le h\) on \(\partial N_\e\), and by the maximum principle,
\[
|u_\e|\le h
\quad \text{in }N_\e.
\]

In particular,
\[
|u_\e(0,y)|
\le
C_\e
=
\frac{2K}{
\cosh\left(\frac{\pi}{4\e}\right)
}.
\]

Taking the supremum over \(y\in[-\e,\e]\) completes the proof.
\end{proof}

\begin{lemma}\label{lem:curvedneck}
Let
\[
A_\e=\left\{re^{i\theta}: r_0-\e<r<r_0+\e,\ |\theta|<\alpha\right\},
\]
where \(r_0>0\) and \(\alpha>0\) are fixed. Suppose \(u_\e:A_\e\to\mathbb{R}\) is harmonic and satisfies
\[
\begin{cases}
u_\e(z)=0, & r=r_0\pm\e,\\
|u_\e(z)|\le K, & \theta=\pm\alpha,
\end{cases}
\]
where \(K>0\) is independent of \(\e\). Then
\[
\sup_{|r-r_0|<\e}|u_\e(r,0)|
\le
\frac{2K}{
\cosh\left(
\frac{\pi\alpha}{
2\log\left(\frac{r_0+\e}{r_0-\e}\right)
}
\right)
}.
\]
In particular, there is a constant $c>0$ such that
\[
\sup_{|r-r_0|<\e}|u_\e(r,0)|
=
O\!\left(
\exp\left( - c / \e
\right)
\right)
\qquad\mbox{as }\e\to0.
\]
\end{lemma}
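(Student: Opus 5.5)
The plan is to reduce Lemma \ref{lem:curvedneck} to Lemma \ref{lem:straightneck} by the conformal change of variables $w=\log z$, which maps the sector of the annulus to a rectangle. Writing $w = t + i\theta$ with $t=\log r$, the annular sector $A_\e$ corresponds to the rectangle
\[
R_\e=\{(t,\theta): \log(r_0-\e)<t<\log(r_0+\e),\ |\theta|<\alpha\}.
\]
Since harmonicity is preserved under conformal maps, $U(t,\theta)=u_\e(e^{t+i\theta})$ is harmonic on $R_\e$. It vanishes on the two sides $t=\log(r_0\pm\e)$ and satisfies $|U|\le K$ on the sides $\theta=\pm\alpha$. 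Set $t_0=\tfrac12\log(r_0^2-\e^2)$, the midpoint in $t$, and let $\eta=\tfrac12\log\frac{r_0+\e}{r_0-\e}$ be the half-width in $t$.

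Rather than rescale to literally apply Lemma \ref{lem:straightneck} (whose rectangle has length $1$ in the free direction), I would redo its barrier argument directly on $R_\e$, with the roles of the coordinates swapped: here $\theta$ is the long direction and $t$ the short one. The barrier is
\[
h(t,\theta)=C\cosh\!\left(\frac{\pi\theta}{4\eta}\right)\cos\!\left(\frac{\pi(t-t_0)}{4\eta}\right),\qquad C=\frac{2K}{\cosh\left(\frac{\pi\alpha}{4\eta}\right)}.
\]
This function is harmonic. For $|t-t_0|\le\eta$ the cosine factor is at least $\cos(\pi/4)=1/\sqrt2$, so $h>0$ on the sides $t=t_0\pm\eta$. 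On $\theta=\pm\alpha$ we get $h\ge 2K/\sqrt2>K$. Hence $|U|\le h$ on $\partial R_\e$, and the maximum principle applied to $h\mp U$ gives $|U|\le h$ throughout $R_\e$.

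Setting $\theta=0$ then yields $|u_\e(r,0)|\le h(\log r,0)\le C$. Since $4\eta=2\log\frac{r_0+\e}{r_0-\e}$, this is exactly the stated bound
\[
\frac{2K}{\cosh\left(\frac{\pi\alpha}{2\log\left(\frac{r_0+\e}{r_0-\e}\right)}\right)}.
\]
The asymptotic statement follows from $\log\frac{r_0+\e}{r_0-\e}=\frac{2\e}{r_0}+O(\e^3)$. This makes the argument of $\cosh$ of order $\pi\alpha r_0/(4\e)$, so the bound is $O(e^{-c/\e})$ for any $c<\pi\alpha r_0/4$.

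There is no real obstacle here. The only point needing care is bookkeeping: applying the maximum principle on the boundary of the rectangle (including the corners, which are handled by continuity or by bounded harmonic functions), and checking that the constant in the barrier matches the displayed bound.
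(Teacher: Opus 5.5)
Your proposal is correct and is essentially the paper's argument. The paper also maps to the rectangle via $\zeta=\log z$. It then translates, rescales by $1/\alpha$ and rotates by $90^\circ$ so that it can invoke Lemma \ref{lem:straightneck} verbatim with $\e$ replaced by $\delta_\e/\alpha$ (your $\eta$ is the paper's $\delta_\e$). You instead rerun the same $\cosh\cdot\cos$ barrier directly on the rectangle, which gives the identical constant.
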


\begin{proof}[Proof of Lemma]
Consider the conformal map
\[
\zeta=\log z=x+iy.
\]
Under this map, the annular sector
\[
A_\e=\left\{re^{i\theta}: r_0-\e<r<r_0+\e,\ |\theta|<\alpha\right\}
\]
is sent to the rectangle
\[
R_\e=
\left(
\log(r_0-\e),
\log(r_0+\e)
\right)\times(-\alpha,\alpha).
\]

Define the harmonic (since it is the composition of a harmonic function with a conformal map) function
\[
U(\zeta)=u_\e(e^\zeta),
\]
which satisfies
\[
U=0
\qquad\mbox{on }x=\log(r_0\pm\e),
\]
and
\[
|U|\le K
\qquad\mbox{on }y=\pm\alpha.
\]

Let
\[
\delta_\e
=
\frac12
\log\left(\frac{r_0+\e}{r_0-\e}\right).
\]
Noticing that the rectangle $R_\e$ is transformed to the 
$\delta_\e/\alpha$-neck  
\[
 (-1,1) \times \left(-\frac{\delta_\e}{\alpha},\frac{\delta_\e}{\alpha}\right),
\]
by a linear change of coordinates (a translation in the real direction, a scaling by $1/\alpha$, followed by a rotation by $90^\circ$), we may apply Lemma \ref{lem:straightneck} which gives
\[
\sup_{x\in[-\delta_\e,\delta_\e]}|U(x,0)|
\le
\frac{2K}{
\cosh\left(
\frac{\pi\alpha}{4\delta_\e}
\right)
}.
\]

Since
\[
4\delta_\e
=
2\log\left(\frac{r_0+\e}{r_0-\e}\right),
\]
we obtain
\[
\sup_{x\in[-\delta_\e,\delta_\e]}|U(x,0)|
\le
\frac{2K}{
\cosh\left(
\frac{\pi\alpha}{
2\log\left(\frac{r_0+\e}{r_0-\e}\right)
}
\right)
},
\]
which gives the desired estimate, since $U(x,0) = u_\e(e^x,0)$.
\end{proof}

In order to state the next proposition, let 
\be\label{eq:defrho}
\rho_\e := \sqrt{2\e/\log\left( \frac{1+\e}{1-\e} \right)}
\ee 
which is the radius of the maximal ridge of the torsion function (see the preliminary discussion at the beginning of this section) for the annulus having inner radius $r = 1-\e$ and outer radius $R = 1+\e$.

\begin{prop}\label{prop:Omega}
Let $v$ be the torsion function of $\Omega = \Omega_{\e,0}$.
For all $\e>0$ sufficiently small, we have
$$v(z^*_k) > \sup_{\{ \Re (z) = 0 \}} v(z), \quad k = 0,1,2,...,M-1,$$ 
where $z^*_k = (-1)^{k}(1+\rho_\e) + (1 + 2k)i$ with $\rho_\e$ as defined in \eqref{eq:defrho}. (The points $z^*_k$ converge to midpoints of the semi-circular arcs of $\Gamma$ as $\e \rightarrow 0$.)
In particular, $v$ has at least $M$ local maxima.
\end{prop}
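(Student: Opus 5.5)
Compare $v$ with explicit local models (strip or annulus) in each section of the tube, bound the error by Lemmas \ref{lem:straightneck} and \ref{lem:curvedneck}, and read off the order-$\e^4$ curvature gain at the bends from \eqref{eq:annulusasymp}.

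\emph{A priori bound.} Since $\Omega_{\e,0}$ lies in a horizontal strip of width comparable to $\e$ locally but is globally bounded, a crude barrier is not uniform. Instead, compare $v$ with the torsion function of the thin tube. Locally near any point of $\Gamma$, $\Omega$ is contained in a slab of width $2\e$ (up to curvature corrections), which gives $0\le v\le C\e^2$ on all of $\Omega$. Concretely, $v \le \e^2 - d^2$-type barriers built from distance to $\Gamma$ should work, since $|z-z(s)|^2$ has Laplacian close to $2$ in a thin tube; I take $K=C\e^2$ in what follows.

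\emph{Straight sections.} Every point with $\Re z=0$ lies, for small $\e$, on the middle of one of the horizontal segments $[-1+2ki,1+2ki]$. There $\Omega$ contains the rectangle $(-1,1)\times(2k-\e,2k+\e)$, and the two horizontal sides lie on $\p\Omega$. I would instead use the slightly shorter rectangle $(-1/2,1/2)\times(\cdot)$, rescaled, to stay away from the smoothed junctions. Let $u=v-(\e^2-(y-2k)^2)$. This $u$ is harmonic, vanishes on $y=2k\pm\e$, and satisfies $|u|\le C\e^2$ on the vertical sides. Lemma \ref{lem:straightneck} (after a translation/dilation, which only changes constants) gives
\[
\sup_{\Re z=0} v \le \e^2 + O(\e^2 e^{-c/\e}).
\]

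\emph{Curved sections.} Near the midpoint of each semicircle, centered at $c_k=(-1)^k+(1+2k)i$, $\Omega$ contains an annular sector of the annulus $1-\e<|z-c_k|<1+\e$ of a fixed half-angle $\alpha$, say $\alpha=\pi/4$, chosen to avoid the smoothing regions. Let $w$ be the annulus torsion function with $r_0=1$. Then $u=v-w$ is harmonic, vanishes on the circular sides, and is bounded by $C\e^2$ on the radial sides. Lemma \ref{lem:curvedneck} yields
\[
v(z_k^*)=w(\rho_\e)+O(\e^2e^{-c/\e}).
\]
By \eqref{eq:annulusasymp}, this equals $\e^2+\e^4/9+O(\e^6)$, since $z_k^*$ lies on the maximal ridge of $w$ at radius $\rho_\e$. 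Comparing the two estimates, $v(z_k^*)-\sup_{\Re z=0}v\ge \e^4/9-O(\e^6)>0$ for small $\e$.

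\emph{Counting maxima.} The lines $\Re z=0$ cut $\Omega$ (for small $\e$) into $M$ pieces $D_0,\dots,D_{M-1}$, each containing exactly one bend and its point $z_k^*$. On $\overline{D_k}$, $v$ attains its maximum, and this maximum is not on $\p\Omega$, where $v=0$. It is also not on the cut lines, since the value at $z_k^*$ is strictly larger. Hence the maximum is an interior point of $D_k$, and so a local maximum of $v$ in $\Omega$. The pieces are disjoint, which gives $M$ distinct local maxima.

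The main obstacle is the uniform a priori bound $|v-\text{model}|\le C\e^2$ on the sides of the comparison regions, since $K$ must be independent of $\e$ up to the factor $\e^2$. Only $e^{-c/\e}\ll\e^2$ is really needed, so even $K=O(1)$ suffices. This is easy: $v\le$ the torsion function of a disc containing $\Omega$, and the models are likewise bounded. Thus the exponential smallness absorbs everything, and the real work reduces to checking the geometric containments of the rectangles and sectors.
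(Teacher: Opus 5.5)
Your proposal is correct and follows the paper's proof essentially step for step. You compare with the strip and annulus torsion functions and control the harmonic errors by Lemmas \ref{lem:straightneck} and \ref{lem:curvedneck} with an $\e$-independent bound $K$. Your comparison with the torsion function of a fixed disc containing $\Omega$ is the right justification for $K$; the $C\e^2$ barrier you sketch first is unnecessary. You then read off the $\e^4/9$ gain from \eqref{eq:annulusasymp} and locate a maximum in each piece of $\Omega\setminus\{\Re(z)=0\}$, as the paper does. Shrinking the rectangle and the sector angle to stay clear of the smoothed junctions is slightly more careful than the paper, which applies the lemmas on the full rectangle and the half-annulus with $\alpha=\pi/2$.
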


\begin{proof}[Proof of Proposition]
The proof uses the torsion function 
$$v_s(z) = \e^2 - y^2$$ 
of an infinite strip $\{ |\Im (z)| < \e \}$ to approximate $v$ in the straight portions of the domain (near the imaginary axis, to be specific) as well as the torsion function
$$
v_a(z)
=
\frac{(1+\e)^2-|z|^2}{2}
+
\frac{2\e}{\log\!\left(\frac{1+\e}{1-\e}\right)}
\log\!\left(\frac{|z|}{1+\e}\right)
$$
of an annulus with inner radius $1-\e$ and outer radius $1+\e$ (see the preliminary discussion at the beginning of this section) as a model case to approximate $v$ in the curved portions of the domain (at the points $z^*_k$, to be specific).

Fix an arbitrary line segment in $\Omega \cap \{ \Re (z) = 0 \}$ which has midpoint $(2k)i$ for some $k=1,...,M-1$.
Applying Lemma \ref{lem:straightneck} to the harmonic function
$v(z+(2k)i) - v_s(z)$ which is defined in the rectangle $(-1,1) \times (-\e,\e)$ gives the uniform bound $v(z+(2k)i) - v_s(z) = O \left( e^{-c/\e} \right)$ for $z$ on the imaginary axis.  Then using the bound $v_s \leq \e^2$, we have
\begin{equation}
\label{eq:unifstraight}
\sup_{\{ \Re (z) = 0 \}} v(z) \leq \e^2 + O \left( e^{-c/\e} \right),
\end{equation}


Similarly, applying Lemma \ref{lem:curvedneck} to $v(z+(-1)^k+(2k+1)i)-v_a(z)$ which is defined in \[
A_\e=\left\{re^{i\theta}: 1-\e<r<1+\e,\ |\theta|< \pi/2 \right\},
\] we get the estimate $v(z+(-1)^k+(2k+1)i)-v_a(z) = O(e^{-c/\e})$ for $z$ in $A_\e$ with argument $\theta = 0$. Evaluating this at $(-1)^k\rho_\e$ and using the asymptotic \eqref{eq:annulusasymp} (with $r_0 = 1$), we have
\begin{equation}
\label{eq:curvedasymp}
v(z^*_k)= v_a(\rho_\e) + O(e^{-c/\e}) = \e^2 + \frac{\e^4}{9} + O(\e^6),
\end{equation}
where the exponentially small error term provided by the lemma has been absorbed into the $\e^6$-order error term.

Since the asymptotic \eqref{eq:curvedasymp} exceeds the estimate \eqref{eq:unifstraight} for all $\e>0$ sufficiently small, we conclude that $v$ has at least $M$ maxima, namely, at least one occurring in each of the $M$ component regions of $\Omega \setminus \{ \Re (z) = 0 \}$.  Indeed, fixing one such region $U$, we have by compactness that $v$ attains a maximum on the closure $\ol{U}$. The maximum must occur in the interior of $U$, since the value $v(z^*_k)$ is uniformly larger than the boundary values on $\partial U$, which follows from the estimate \eqref{eq:unifstraight} and the vanishing Dirichlet condition on $\partial \Omega$ in the original boundary value problem \eqref{eq:landscape} defining the torsion function $v$.
\end{proof}

\subsection*{Step 2 (preservation of hot spots in the perturbation $\Omega_{\e,\delta}$ of $\Omega_{\e,0}$)}

The domain $\Omega_{\e,0}$ has the desired number of hot spots, but it has an infinite set (namely, the curve $\Gamma$) of far spots.  This will be rectified by passing to the perturbed domain $\Omega_{\e,\delta}$ that has only one far spot, as we will show in Step 3.  First, we check that the hot spots survive under this perturbation when $\delta >0$ is sufficiently small.
For this, we will use the following continuity lemma.  We note in passing that related results can be shown for more general elliptic PDE using Sobolev space methods, but here we opt for a classical approach using the Harnack convergence theorem.

\begin{lemma}
\label{lemma:cvgc}
Let
$
\Omega_1\subset \Omega_2\subset \cdots
$
be an increasing sequence of bounded domains in $\mathbb{R}^2$ whose union
$
\Omega := \bigcup_{j=1}^{\infty}\Omega_j
$
is bounded. For each $j$, let $v_j$ denote the torsion function of $\Omega_j$, that is,
$$
\begin{cases}
\Delta v_j = -2  \text{ in }\Omega_j, \\
\quad v_j =0  \text{ on }\partial\Omega_j,
\end{cases}
$$
and let $v$ denote the torsion function of $\Omega$. Then
$
v_j\longrightarrow v
$
uniformly on compact subsets of $\Omega$.
\end{lemma}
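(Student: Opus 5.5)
The plan is to use monotonicity together with Harnack's convergence theorem, applied to the harmonic functions $h_j(z) := v_j(z) + \tfrac12|z|^2$. Throughout I take the domains to be regular for the Dirichlet problem, so that each $v_j$ (and $v$) is continuous on the closure and vanishes on the boundary; this holds in our application, where all domains are bounded by piecewise smooth curves.

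\textbf{Monotonicity.} First I show $0 \le v_j \le v_{j+1} \le v$ on $\Omega_j$.

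Nonnegativity of each torsion function follows from superharmonicity and the minimum principle. For the comparison, the function $v_{j+1}-v_j$ is harmonic in $\Omega_j$. On $\partial\Omega_j$ it equals $v_{j+1}\ge 0$, since $\partial\Omega_j \subset \overline{\Omega_{j+1}}$. The maximum principle therefore gives $v_{j+1}\ge v_j$ in $\Omega_j$, and the same argument gives $v \ge v_j$.

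\textbf{Harnack convergence.} Fix a compact $K\subset\Omega$. The sets $\Omega_j$ form an increasing open cover of $K$, so $K\subset\Omega_{j_0}$ for some $j_0$.

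For $j\ge j_0$ the functions $h_j$ are harmonic on $\Omega_{j_0}$ and nondecreasing in $j$. They are bounded above by $v+\tfrac12|z|^2$, which is bounded because $\Omega$ is bounded.

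By Harnack's convergence theorem (applied on each $\Omega_{j_0}$ and then exhausting $\Omega$), $h_j$ converges locally uniformly on $\Omega$ to a harmonic function $h$. Consequently $v_j \to w := h - \tfrac12|z|^2$ locally uniformly, where $\Delta w = -2$ in $\Omega$ and $0\le w\le v$.

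\textbf{Identifying the limit.} The function $v-w$ is harmonic and nonnegative in $\Omega$. Since $w\ge 0$, we also have
\[
\limsup_{z\to\zeta}\,(v-w)(z) \le \lim_{z\to\zeta} v(z) = 0 \quad\text{for every } \zeta\in\partial\Omega .
\]
The maximum principle then gives $v-w\le 0$, so $w=v$. Hence $v_j\to v$ uniformly on compact subsets of $\Omega$.

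\textbf{Main obstacle.} The only delicate point is the boundary behavior of $v$, namely that $v(z)\to 0$ as $z\to\partial\Omega$. This is exactly where regularity of $\Omega$ enters.

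For general bounded domains, the same argument goes through if the torsion function is defined via Perron's method, because then $v$ vanishes at all regular boundary points and the exceptional set is polar. In the application the domain $\Omega_{\e,0}$ has a $C^1$ boundary, so this point causes no trouble.
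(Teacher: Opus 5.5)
Your proposal is correct and follows essentially the same route as the paper: domain monotonicity, Harnack's convergence theorem applied to the increasing harmonic functions $h_j = v_j + \frac{1}{2}|z|^2$, and identification of the limit by applying the maximum principle to $v-w$. You are more explicit than the paper about the boundary behavior. The paper's step from $0 \le v-w \le v$ to $v-w\equiv 0$ tacitly uses $v\to 0$ at $\partial\Omega$, which is harmless for the piecewise smooth domains in the application.
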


\begin{proof}[Proof of Lemma]
By domain monotonicity of the torsion function,
$$
v_j\leq v_{j+1} \leq v
$$
pointwise in $\Omega_j$.
 Thus, for every $z\in\Omega$, the (eventually defined) sequence $v_j(z)$ is  increasing, and bounded above.
Consequently, the sequence of harmonic functions defined by
$$
h_j(z):=v_j(z)+\frac{|z|^2}{2}.
$$ is also pointwise increasing and bounded by $v(z)+\frac{|z|^2}{2}$.  By the Harnack convergence theorem, $h_j$ converges uniformly on compact subsets of $\Omega$ to a harmonic function $h$. Hence
$$
v_j\longrightarrow w:=h-\frac{|z|^2}{2}
$$
uniformly on compact subsets of $\Omega$, and
$ \Delta w = -2 $.

Since $v_j\leq v$, we have $w\leq v$, and hence $v-w$ is a harmonic function in $\Omega$ satisfying
$
0\leq v-w \leq v.
$
The maximum principle therefore gives $ v-w \equiv0$. 
We conclude that
$$
v_j\longrightarrow v
$$
uniformly on compact subsets of $\Omega$, as desired.
\end{proof}

Fix $\e>0$ sufficiently small that the conclusion of Proposition \ref{prop:Omega} holds, and let $v^*_k$ (for $k=0,1,2,...,M-1$) be as in the proposition statement.

Consider the monotone sequence of domains $\Omega_j := \Omega_{\e,\delta_j}$ with $\delta_j$ decreasing toward zero, and let $v_j$ denote the torsion function of $\Omega_j$.
By domain monotonicity, we have $v_j \leq v$.  In particular this holds along the imaginary axis, where we have the uniform bound $v_j \leq L_0:=\sup_{\{ \Re (z) = 0 \}} v(z)$.
On the other hand, applying Lemma \ref{lemma:cvgc} at each of the $M$ points $z^*_k$, $k = 0,1,2,...,M-1$, we have $v_j(z^*_k) \rightarrow v(z^*_k)$.  Hence, for all $j$ sufficiently large, we have $v_j(z^*_k) > L_0$ for all $k = 0,1,2,...,M-1$, and we conclude as in the proof of Proposition \ref{prop:Omega} that the torsion function $v_j$ of $\Omega_j := \Omega_{\e,\delta_j}$ has at least $M$ maxima for all $j$ sufficiently large.

\subsection*{Step 3 ($\Omega_{\e,\delta}$ has a unique far spot)}

Recall that the \emph{medial axis} $\text{Med}(S)$ of a closed subset $S\subseteq\mathbb{R}^n$ is the set
\[ 
\text{Med}(S) := {\{z \in\mathbb{R}^n \setminus S \mid \exists p_1\not= p_2\in S, \text{dist}(z,S) = \vert z - p_1\vert = \vert z-p_2\vert \}}\text{.}
\]
One has immediately that if $z\not\in\text{Med}(S)$ then $z$ is not a local maximum of the distance function $\text{dist}(\cdot,S)$, as one can increase distance in the opposite of the direction to the unique distance-minimizer to $z$ on $S$. For this step, it is therefore sufficient to characterize the relevant part of the medial axis for the boundary, $\partial\Omega_{\varepsilon,\delta}$, of our proposed counterexample(s) and the relevant distance function when restricted to that medial axis. 

\begin{lemma}\label{lemma:medialaxis}
Let $\Gamma$ be a $C^3$ curve with arclength parametrization
$z(s)$, $s\in[0,L]$.
Define
\[
A:=\max_{s\in[0,L]} |z'''(s)|,
\]
and
\begin{equation}
d_A
:=
\inf_{\substack{s,t\in[0,L]\\ |s-t|\ge 1/\sqrt A}}
|z(s)-z(t)|.
\end{equation}
Assume $d_A > 0$.  Fix $\e>0$ satisfying 
\(
\varepsilon<\min\left\{\frac{1}{16\sqrt A},\frac{d_A}{4}\right\}.
\)
For all sufficiently small $\delta>0$, the curve $\Gamma$ is part of the medial
axis of the boundary of the variable-radius tube
\[
\Omega_{\varepsilon,\delta}
=
\bigcup_{0\le s\le L}
B\bigl(z(s),r(s)\bigr),
\qquad
r(s)=\varepsilon - \delta s.
\]
which intersects $\Omega_{\e,\delta}$.
Moreover,
\[
\operatorname{dist}(z(s),\partial\Omega_{\varepsilon,\delta})=r(s),
\qquad 0\le s\le L.
\]
In particular, the distance to the boundary $\operatorname{dist}(\cdot,\partial\Omega_{\varepsilon,\delta})$ restricted to $\Omega_{\e,\delta}$ has a unique local maximum, attained at $z(0)$.
\end{lemma}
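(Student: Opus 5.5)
We prove $\operatorname{dist}(z(s),\partial\Omega_{\e,\delta})=r(s)$ for every $s$, show that each $z(s)$ has at least two nearest boundary points, and then read off the far spot from the monotonicity of $r$.

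\emph{Distance formula.} Write $p=z(s)$. The upper bound is immediate: $\partial B(z(s),r(s))$ contains points of $\partial\Omega_{\e,\delta}$. To see this, take the two points $z(s)\pm r(s)\,n(s)$, where $n$ is the unit normal. For the lower bound it suffices to show
\[
B\bigl(z(s),r(s)\bigr)\cap \partial\Omega_{\e,\delta}=\emptyset \quad\text{(in the open ball)},
\]
i.e.\ $B(z(s),r(s))\subset\Omega_{\e,\delta}$, which holds by definition.

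\emph{The two nearest points lie on the boundary.} The point is to show that $z(s)\pm r(s)n(s)$ are genuinely boundary points, i.e.\ not contained in any other ball $B(z(t),r(t))$. We split according to $|s-t|$.
\begin{itemize}
\item[(a)] \emph{Far parameters, $|s-t|\ge 1/\sqrt A$.} Here $|z(s)-z(t)|\ge d_A>4\e$, while $r(s)+r(t)<2\e$. Hence $|z(s)\pm r(s)n(s)-z(t)|>d_A-\e>r(t)$.
\item[(b)] \emph{Near parameters, $|s-t|<1/\sqrt A$.} Set $q=z(s)+r(s)n(s)$ and compute $|q-z(t)|^2-r(t)^2$ by Taylor expansion in $h=t-s$. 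Using $|z'|=1$, $z'\perp n$ and $z''=\kappa n$,
\[
|q-z(t)|^2=r(s)^2+h^2\bigl(1-r(s)\kappa(s)\bigr)+O\bigl(r(s)A|h|^3\bigr)+O(h^4\cdot\ldots),
\]
while
\[
r(t)^2=r(s)^2-2\delta r(s)h+\delta^2h^2 .
\]
The curvature bound needed here has to come from $A$ and the constraint on $\e$. Since $|z''|$ is bounded in terms of the curve, we take the hypotheses to guarantee $\e|\kappa|<1/2$, and handle the cubic remainder via $\e<1/(16\sqrt A)$ together with $|h|<1/\sqrt A$. The difference is then
\[
\ge \tfrac{1}{4}h^2+2\delta r(s)h-\delta^2h^2 .
\]
\end{itemize}

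\emph{Where the main obstacle lies.} The delicate part is step (b). Because of the linear term $2\delta r(s)h$, for $h<0$ of order $\delta\e$ the difference can become negative. So with a taper the point $q$ is \emph{not} exactly a boundary point: it is swallowed by slightly earlier, larger balls. The correct nearest points are therefore perturbed. Our plan is as follows.
\begin{itemize}
\item Compute the envelope of the family of circles directly. Its contact points with $\partial B(z(s),r(s))$ satisfy $\langle w-z(s),z'(s)\rangle=-r(s)r'(s)=\delta r(s)$.
\item This gives the two points $w_\pm=z(s)+r(s)\bigl(\delta z'(s)\pm\sqrt{1-\delta^2}\,n(s)\bigr)$.
\item Verify, by the same Taylor estimate now centered at the envelope condition (where the linear term cancels), that $w_\pm\notin B(z(t),r(t))$ for $t\ne s$. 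For small $\delta$ the quadratic term $\gtrsim h^2$ dominates, and case (a) is unchanged.
\end{itemize}
Thus $w_\pm$ are two distinct points of $\partial\Omega_{\e,\delta}$ at distance $r(s)$ from $z(s)$. Hence $z(s)\in\mathrm{Med}(\partial\Omega_{\e,\delta})$ and $\operatorname{dist}(z(s),\partial\Omega)=r(s)$. At the endpoints $s=0,L$ the cap boundary also contains a whole arc of $\partial B(z(s),r(s))$, which is consistent with this.

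\emph{Unique far spot.} We show that the medial axis inside $\Omega_{\e,\delta}$ is exactly $\Gamma$. Take any point $p\in\Omega_{\e,\delta}$ off $\Gamma$. Its nearest point on $\Gamma$ is $z(s)$ with $p-z(s)$ normal (or in a cap direction at an endpoint), and the tube structure from (a) and (b) shows that $p$ has a unique nearest boundary point, namely on the ray from $z(s)$ through $p$. By the remark preceding the lemma, such $p$ is not a local maximum. On $\Gamma$, the distance equals $r(s)=\e-\delta s$, which is strictly decreasing. Moreover, moving from $z(s)$ with $s>0$ in the direction $-z'(s)$ increases the distance to first order: the distance function is semiconcave, with superdifferential the convex hull of the unit vectors pointing away from $w_\pm$, which has the form $-\delta z'(s)+\ldots$. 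Hence no $z(s)$ with $s>0$ is a local maximum, and the unique local maximum is at $z(0)$.
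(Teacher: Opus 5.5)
Your argument for the distance formula and for $\Gamma\subset\mathrm{Med}(\partial\Omega_{\e,\delta})$ follows the paper's route. It uses the envelope points $w_\pm(s)=z(s)+r(s)\bigl(\delta T(s)\pm\sqrt{1-\delta^2}\,N(s)\bigr)$, a local Taylor comparison in which the linear term cancels and the quadratic coefficient is $1-\delta^2\mp r(s)\sqrt{1-\delta^2}\,\kappa(s)$, a nonlocal comparison via $d_A$, and the lower bound from $B(z(s),r(s))\subset\Omega_{\e,\delta}$. (Drop the initial claim that $z(s)\pm r(s)n(s)$ are boundary points; you refute it yourself.) One ingredient there is assumed rather than proved. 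The hypotheses bound only $z'''$, so you cannot simply ``take the hypotheses to guarantee $\e|\kappa|<1/2$''. The missing observation is that $z'''=\kappa' N-\kappa^2 T$, so $|z'''|^2=(\kappa')^2+\kappa^4$ and $|\kappa|\le\sqrt A$. Then $r(s)|\kappa(s)|\le\e\sqrt A<1/16$, which is exactly what the condition $\e<1/(16\sqrt A)$ is designed for. Together with $|h|<1/\sqrt A$, which also controls the cross term of size $|h|\cdot A|h|^3$ in the remainder, this makes the quadratic term dominate.

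The substantive gap is the exclusion of far spots off $\Gamma$. Your picture (nearest point of $\Gamma$ is the normal projection $z(s)$, nearest boundary point on the ray from $z(s)$ through $p$) is the untapered one and is wrong for $\delta>0$. The inward normal to $\partial\Omega_{\e,\delta}$ at $w_\pm(s')$ points at $z(s')$ in the direction $-\bigl(\delta T(s')\pm\sqrt{1-\delta^2}N(s')\bigr)$, which is not normal to $\Gamma$. More importantly, (a) and (b) only show that certain points of individual circles are boundary points. They do not show that an arbitrary $p\in\Omega_{\e,\delta}\setminus\Gamma$ has a unique nearest boundary point, which is what the remark before the lemma requires.

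You need two further facts, as in the paper.
\begin{enumerate}
\item The full boundary. By compactness $\partial\Omega_{\e,\delta}\subset\bigcup_t\partial B(z(t),r(t))$. A point $z^*=z(s)+r(s)(aT+bN)$ with $a\neq\delta$ has nonzero first-order term $2r(s)(\delta-a)h$ in $|z^*-z(t)|^2-r(t)^2$, $h=t-s$. So it is swallowed by a neighbouring disk unless it lies on one of the two end arcs. Hence $\partial\Omega_{\e,\delta}$ is the $C^1$ curve formed by $w_\pm$ and the end arcs, tangent to the circles.
\item Internal tangency. Let $q\in\partial B(z(t),r(t))$ be a nearest boundary point of $p$, at distance $d$. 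Then $B(p,d)$ and $B(z(t),r(t))$ are both tangent to $\partial\Omega_{\e,\delta}$ at $q$ from inside. Moreover $d<r(t)$: $d=r(t)$ forces $p=z(t)$, while $d>r(t)$ would put whichever of $w_+(t)\neq w_-(t)$ differs from $q$ inside $B(p,d)$. So $\overline{B(p,d)}\setminus\{q\}\subset B(z(t),r(t))\subset\Omega_{\e,\delta}$, and $q$ is unique.
\end{enumerate}

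Once $\mathrm{Med}(\partial\Omega_{\e,\delta})\cap\Omega_{\e,\delta}=\Gamma$ is established, your semiconcavity argument is unnecessary. Since $\operatorname{dist}(z(s'),\partial\Omega_{\e,\delta})=r(s')>r(s)$ for $s'<s$, no $z(s)$ with $s>0$ is a local maximum. The interior maximum guaranteed by compactness must then sit at $z(0)$.
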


It is possible that this result is known (perhaps even with a relaxed smoothness assumption).  However, as we were unable to find a reference, we include a proof.  The proof is based on elementary vector calculus and point-set topology.

\begin{proof}[Proof of Lemma]
Since $z$ is an arclength parametrization, we have, $z' = T$, and $z'' = \kappa N$, where $T, N$ are unit tangent and unit normal vectors, respectively, and $\kappa$ is the curvature.
We then have
\[
z'''=\kappa' N-\kappa^2T.
\]
Hence, expanding $|z'''|^2$ as a dot product using the above equation, the assumed bound on $z'''$ implies
\[
|\kappa|\le \sqrt A.
\]
Assume $\delta^2 < 1$ and $\delta L < \epsilon$, then define
\begin{equation}\label{eq:wparam}
w_\pm(s)
=
z(s)+r(s)\delta T(s)
\pm r(s)\sqrt{1-\delta^2}\,N(s),
\end{equation}
where $
r(s)=\varepsilon-\delta s,
$ as stated in the hypothesis of the lemma.

\medskip

\noindent\textbf{Claim.}
The boundary of $\Omega_{\varepsilon,\delta}$ consists of the pair of curves parameterized by
\(
w_\pm(s)
\)
together with the circular arc of $\partial B(z(0),r(0))$ from $w_-(0)$ to $w_+(0)$ along with the circular arc of $
\partial B(z(L),r(L))$
from $w_+(L)$ to $w_-(L)$.

\smallskip

We first note that $\partial \Omega_{\varepsilon,\delta} \subset 
\bigcup_{0\le s\le L}
\partial B\bigl(z(s),r(s)\bigr)$ follows from continuity of $z(s)$ and $r(s)$ along with compactness of $[0,L]$.  This implies 
\begin{equation}\label{eq:bdryOmega}
\partial \Omega_{\varepsilon,\delta} = 
\bigcup_{0\le s\le L}
\partial B\bigl(z(s),r(s)\bigr) \setminus \bigcup_{0\le s\le L}
B\bigl(z(s),r(s)\bigr)
\end{equation}
as the reverse inclusion follows trivially from $\mathring{\Omega}_{\e,\delta} = \Omega_{\varepsilon,\delta}$.

Now we will show that $w_\pm(s) \in \partial \Omega_{\varepsilon,\delta}$.
Since $|w_\pm(s)-z(s)|=r(s)$, we have $w_\pm(s) \in \partial B_{r(s)}(z(s))$.  Hence, in light of  \eqref{eq:bdryOmega}, it suffices to show 
\begin{equation}\label{eq:goalgtr}
|w_\pm(s)-z(t)|>r(t),
\qquad t\neq s.
\end{equation}

Local Case: Let \(h=t-s\), and assume $0< |h| < 1 / \sqrt{A}$.  Taylor's theorem gives
\[
z(t)
=
z(s)
+hT(s)
+\frac{h^2}{2}z''(s)
+R_3,
\qquad
|R_3|
\le
\frac{A}{6}|h|^3.
\]
Substituting this expansion into
\(
|w_\pm(s)-z(t)|^2-r(t)^2
\),
and recalling that $r(t) = r(s) - h\delta$, the constant and linear terms cancel, and the quadratic term is
\[
\left(
1-\delta^2
\mp r(s)\sqrt{1-\delta^2}\,\kappa(s)
\right)h^2.
\]

For \(\delta>0\) sufficiently small (so that $\delta L < \e$ and $\delta^2 < 1/8$), we have 
\begin{equation}
\label{eq:3over4}
(1-\delta^2
\mp r(s)\sqrt{1-\delta^2}\,\kappa(s))h^2
 > \frac34h^2,
 \end{equation}
since
$|r(s)| < \e + \delta L$, $\varepsilon<\frac{1}{16\sqrt A}$,
and $|\kappa(s)|\le\sqrt A$.

Hence
\begin{equation}
|w_\pm(s)-z(t)|^2-r(t)^2 > \frac{3}{4} h^2 + E,
\end{equation}
where $E$ consists of the higher order terms involving \(R_3\) 
\[
E = -2\left(w_\pm(s)-z(s)-hT(s)-\frac{h^2}{2}z''(s)\right)\cdot R_3
+|R_3|^2.
\]
By the Cauchy--Schwarz inequality and triangle inequality, 
\[ |E| \leq 
2\left(r(s)+|h|+\frac{\sqrt A}{2}h^2\right)|R_3|
+|R_3|^2.
\]

Using
\[
|R_3|\le \frac{A}{6}|h|^3,
\qquad
|h|<\frac1{\sqrt A},
\]
we estimate
\[
\begin{aligned}
2\left(r(s)+|h|+\frac{\sqrt A}{2}h^2\right)|R_3|
+|R_3|^2
&\le
2\left(\frac{1}{16 \sqrt{A}}+\frac{1}{\sqrt{A}}+\frac{1}{2\sqrt{A}}\right)
\frac{A}{6}|h|^3
+\frac{A^2}{36}|h|^6  \\
&\le
\left(\frac{1}{8}+1+\frac{1}{2}\right)\frac{1}{3}|h|^2
+\frac{1}{36}|h|^2  \\
&=
\frac{41}{72}|h|^2 < \frac{3}{4}|h|^2.
\end{aligned}
\]
Therefore, for $0< |t-s|< 1 / \sqrt{A}$, we have
\[
|w_\pm(s)-z(t)|^2-r(t)^2>0,
\]
which gives the desired estimate \eqref{eq:goalgtr}.

Nonlocal Case: If instead $|t-s|\ge1/\sqrt A$, then by definition of $d_A$,
\[
|z(s)-z(t)|\ge d_A.
\]
Hence (still assuming $\delta$ sufficiently small so that $\delta L < \e$)
\[
\begin{aligned}
|w_\pm(s)-z(t)|
&\ge |z(s)-z(t)|-|w_\pm(s)-z(s)|\\
&\ge d_A-r(s)\\
&> d_A-2\varepsilon\\
&> 2\varepsilon\\
&> r(t),
\end{aligned}
\]
where we have also used \(
\varepsilon<\frac{d_A}{4}.
\)
This proves the desired estimate \eqref{eq:goalgtr} and shows that $w_\pm(s) \in \partial \Omega_{\varepsilon,\delta}$ for each $s$.

Moreover, for each $s \in (0,L)$, the two points $w_\pm(s)$ are the only points on $\partial B_{r(s)}(z(s))$ that contribute to the boundary of $\Omega_{\varepsilon,\delta}$, since other points lead to a nonzero first-order coefficient in the corresponding Taylor expansion.
Indeed, let $z^*(s) \in \partial B_{r(s)}(z(s))$ and write $z^*(s) = z(s) + r(s) \left( a T(s) + b N(s) \right) $, $a^2 + b^2 = 1$. The first-order coefficient of $
|z^*(s)-z(t)|^2-r(t)^2
$ is $-2r(s)(a+\delta)$
which is nonzero for 
$z^*(s) \neq w_\pm(s)$.
Therefore $z^*(s)$ lies in some nearby disk $B(z(t),r(t))$ and hence does not belong to $\partial\Omega_{\varepsilon,\delta}$.

On the other hand, the disks centered at the endpoints do contribute additional boundary points since there are neighboring disks on only one side.

At $s=0$, let
$$
z^*
=
z(0)+r(0)\left(aT(0)+bN(0)\right),
\qquad
a^2+b^2=1.
$$

For $h = t - s = t > 0$, the linear term in the same expansion of $|z^*-z(h)|^2-r(h)^2$ as above is $
2r(0)(\delta-a)h $ 
which changes sign at $a=\delta$. Points
with $a>\delta$ lie inside nearby disks.  For $a\le\delta$,
the linear term is nonnegative.  The quadratic term is now
$$
\left( 1-\delta^2-r(0)\kappa(0)b \right) h^2,
$$
which admits the same lower bound as in \eqref{eq:3over4}, namely, we have
$$
\left( 1-\delta^2-r(0)\kappa(0)b \right) h^2 > \frac{3}{4} h^2,
$$
since $|b|\le1$, $|\kappa(0)|\le\sqrt A$, and
$r(0)<2\varepsilon<1/(8\sqrt A)$.  Hence, the same local and nonlocal
estimates as in the above interior point case show that
\[
|z^*-z(t)|>r(t)
\qquad\text{for all }t\in(0,L].
\]
Consequently, the points in $\partial B(z(0),r(0))$ that belong to
$\partial\Omega_{\varepsilon,\delta}$ are precisely those with $a\le\delta$, i.e., the circular arc from $w_-(0)$ to $w_+(0)$.

Similarly, at $s=L$
we find that the circular arc of $\partial B(z(L),r(L))$ from $w_-(L)$ to $w_+(L)$ contributes to the boundary of $\Omega_{\varepsilon,\delta}$.

We conclude that $\Gamma$ is contained in the medial axis of \(\partial\Omega_{\varepsilon,\delta}\) since each point $z(s)$ has at least two closest points $w_\pm(s)$ on $\partial \Omega_{\varepsilon,\delta}$. 
It remains to show that there are no other medial-axis points.  Let
$z^*\in\Omega_{\varepsilon,\delta}\setminus\Gamma$, and let $q\in\partial\Omega_{\varepsilon,\delta}$ be a nearest boundary point, i.e., a point realizing the distance
\[
d=\operatorname{dist}(z^*,\partial\Omega_{\varepsilon,\delta})=|z^*-q|.
\]
Then $\overline{B(z^*,d)}\subset\overline{\Omega_{\varepsilon,\delta}}$ and the two circles
$\partial B(z^*,d)$ and $\partial B(z(s),r(s))$, where
$q\in\partial B(z(s),r(s))$, are tangent at $q$ and lie on the same
side of their common tangent.  Hence
\[
\overline{B(z^*,d)}\subset\overline{B(z(s),r(s))}.
\]
Thus every point of $\partial B(z^*,d)$ other than $q$ lies in
$B(z(s),r(s))\subset\Omega_{\varepsilon,\delta}$, so $q$ is the unique closest boundary
point to $z^*$.  Hence $\Omega_{\varepsilon,\delta}\setminus\Gamma$ contains no medial-axis
points, and therefore the medial axis of $\Omega_{\varepsilon,\delta}$ is $\Gamma$.

Moreover, since we have established that $w_\pm(s)$ are nearest points to $z(s)$ for each $s \in [0,L]$, it follows that $$
\operatorname{dist}(z(s),\partial\Omega_{\varepsilon,\delta})
=
r(s),
$$
which has a unique local maximum at $s=0$ (recall $r(s)=\varepsilon - \delta s$).
\end{proof}

Finally, to complete the proof of the theorem, having chosen $\e > \delta >0$ sufficiently small as in Step 2, we take $\e>0$ (and subsequently $\delta>0$) smaller if necessary in order to apply Lemma \ref{lemma:medialaxis}. We conclude that $\Omega_{\e,\delta}$ has at least $M$ hot spots and only one far spot, as desired.
\end{proof}

\section{A simply-connected domain with a degenerate maximum}

\begin{proof}[Proof of Theorem \ref{thm:SteinNeg}]
As stated in the theorem, we consider the one-parameter family of domains $\Omega\subset\mathbb{C}$ with boundary given by the following algebraic curve (excluding the origin, which is an isolated point in the solution set not included in the domain boundary)
$$\{x+iy \in \C: (x^2 +y^2)^2 = a^2(x^2 + y^2) + 4x^2\},$$
where $a>0$ is a parameter.
Our goal is to show that for $a = \sqrt{\frac{2}{1+\sqrt{2}}}$ (see Figure \ref{fig:Neumann}), the torsion function $v$ has a unique maximum that occurs at $z=0$, and the Hessian matrix $H_v$ of the torsion function $v$ is degenerate at this point.  Namely, we want to show
$$H_v(0) = \left( \begin{array}{cc}
   0 & 0 \\
   0 & -2 \\
  \end{array}\right).$$
Let $R = \frac{a + \sqrt{a^2 + 4}}{2}.$ Note that $R>1.$ 

We recall the following facts \cite{FlSi}, \cite{LR}:

\begin{itemize}
\item The complex algebraic function $\phi(z) = \frac{1-R^4 + \sqrt{(R^4-1)^2 + 4R^4z^2}}{2Rz}$, for an appropriate branch of the square root function, maps $\Omega$ conformally onto the unit disk $\mathbb{D}$.
\item The torsion function of $\Omega$ can be expressed explicitly in terms of the conformal mapping $\phi$ as
\begin{equation}
v(z) = \dfrac{R^4 - 1}{2R^2} + \dfrac{\Re(z\phi(z))}{R} - \dfrac{|z|^2}{2}.
\end{equation}
\item The gradient $\nabla v$ (viewed as a complex number) is given by
\begin{equation}\label{grad}
\nabla v(z) = \overline{\dfrac{2R^2z}{\sqrt{(R^4-1)^2 + 4R^4z^2}}} - z.
\end{equation}
\item The number of critical points of $v$ bifurcates at the critical parameter $R_0 = \sqrt{1+\sqrt{2}}$.  Namely, if $1< R < R_0,$   $\nabla v$ has exactly 3 zeros (counting multiplicity), and for $R \geq R_0$ the gradient $\nabla v$ has a single zero, namely, the origin.
\end{itemize}

With all these facts in hand, we proceed to compute the Hessian matrix of $v$ at $z=0$ while taking the critical parameter value $R=R_0 := \sqrt{1+\sqrt{2}}$.
The first row of the Hessian matrix can be expressed as $\partial_x \nabla v = \left( \partial_z  + \partial_{\bar{z}}  \right) \nabla v$, and the second row of the Hessian matrix can be expressed as $\partial_y \nabla v = i\left( \partial_z v - \partial_{\bar{z}}\right) \nabla v$.

We have
\begin{align}
\left( \partial_z  + \partial_{\bar{z}}  \right) \nabla v &= -1 + \frac{2R_0^2- 2R_0^2z((R_0^4-1)^2+4R_0^4z^2)^{-1/2}8R_0^4z}{(R_0^4-1)^2+4R_0^4z^2} \big\rvert_{z=0} \\
&= -1 + \frac{2R_0^2}{(R_0^4-1)} = 0.
\end{align}
This verifies that both entries in first row of the Hessian matrix are equal to zero.
Turning to the second row, we have
\begin{align}
i\left( \partial_z  - \partial_{\bar{z}}  \right) \nabla v &= -i - i\frac{2R_0^2- 2R_0^2z((R_0^4-1)^2+4R_0^4z^2)^{-1/2}8R_0^4z}{(R_0^4-1)^2+4R_0^4z^2} \big\rvert_{z=0} \\
&= -i - i\frac{2R_0^2}{(R_0^4-1)} = -2i.
\end{align}

Hence, for the critical parameter value $R=R_0$, the Hessian matrix of $v$ at $z=0$ is given by
$$H_v(0) = \left( \begin{array}{cc}
   0 & 0 \\
   0 & -2 \\
  \end{array}\right),$$
  as desired.
\end{proof}

\begin{figure}[h]
    \centering
    \includegraphics[scale=0.4]{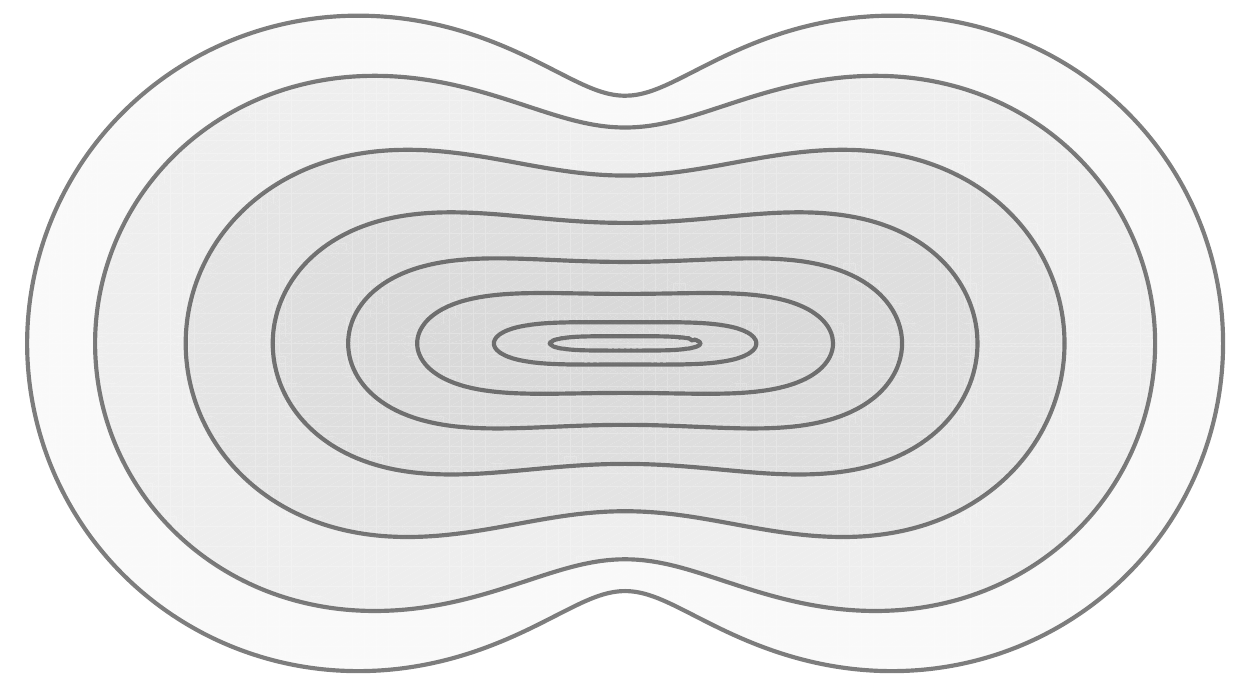}
    \caption{The Neumann oval at its critical parameter along with the level curves of its torsion function which has a degenerate maximum at the center.}
    \label{fig:Neumann}
\end{figure}

\begin{remark}\label{rmk:star}
The Neumann's oval is an example from the family of algebraic curves referred to as Hippopedes. From the classical theory of plane curves it is known that Hippopedes can be parametrized in polar coordinates \cite[p. 145]{La}, showing that the Neumann's oval is star-shaped with respect to the origin. Hence, beyond showing that ``simply-connected'' is insufficient for an eccentricity bound on level curves near the maximum, this example shows that even ``star-shaped'' is insufficient.
\end{remark}

\bibliographystyle{abbrv}
\bibliography{crit}

\end{document}